\begin{document}

\newtheorem{theorem}{Theorem}
\newtheorem{thm}[theorem]{Theorem}
\newtheorem{lem}[theorem]{Lemma}
\newtheorem{claim}[theorem]{Claim}
\newtheorem{cor}[theorem]{Corollary}
\newtheorem{prop}[theorem]{Proposition}
\newtheorem{definition}{Definition}
\newtheorem{question}[theorem]{Question}
\newcommand{\hh}{{{\mathrm h}}}

\numberwithin{equation}{section}
\numberwithin{theorem}{section}
\numberwithin{table}{section}

\def\sssum{\mathop{\sum\!\sum\!\sum}}
\def\ssum{\mathop{\sum\ldots \sum}}
\def\iint{\mathop{\int\ldots \int}}

\def\squareforqed{\hbox{\rlap{$\sqcap$}$\sqcup$}}
\def\qed{\ifmmode\squareforqed\else{\unskip\nobreak\hfil
\penalty50\hskip1em\null\nobreak\hfil\squareforqed
\parfillskip=0pt\finalhyphendemerits=0\endgraf}\fi}

\newfont{\teneufm}{eufm10}
\newfont{\seveneufm}{eufm7}
\newfont{\fiveeufm}{eufm5}
%
%
\newfam\eufmfam
     \textfont\eufmfam=\teneufm
\scriptfont\eufmfam=\seveneufm
     \scriptscriptfont\eufmfam=\fiveeufm
%
%
\def\frak#1{{\fam\eufmfam\relax#1}}

\newcommand{\bflambda}{{\boldsymbol{\lambda}}}
\newcommand{\bfmu}{{\boldsymbol{\mu}}}
\newcommand{\bfxi}{{\boldsymbol{\xi}}}
\newcommand{\bfrho}{{\boldsymbol{\rho}}}

\def\fK{\mathfrak K}
\def\fT{\mathfrak{T}}

\def\fA{{\mathfrak A}}
\def\fB{{\mathfrak B}}
\def\fC{{\mathfrak C}}

\def \balpha{\bm{\alpha}}
\def \bbeta{\bm{\beta}}
\def \bgamma{\bm{\gamma}}
\def \blambda{\bm{\lambda}}
\def \bchi{\bm{\chi}}
\def \bphi{\bm{\varphi}}
\def \bpsi{\bm{\psi}}

\def\eqref#1{(\ref{#1})}

\def\vec#1{\mathbf{#1}}


\def\cA{{\mathcal A}}
\def\cB{{\mathcal B}}
\def\cC{{\mathcal C}}
\def\cD{{\mathcal D}}
\def\cE{{\mathcal E}}
\def\cF{{\mathcal F}}
\def\cG{{\mathcal G}}
\def\cH{{\mathcal H}}
\def\cI{{\mathcal I}}
\def\cJ{{\mathcal J}}
\def\cK{{\mathcal K}}
\def\cL{{\mathcal L}}
\def\cM{{\mathcal M}}
\def\cN{{\mathcal N}}
\def\cO{{\mathcal O}}
\def\cP{{\mathcal P}}
\def\cQ{{\mathcal Q}}
\def\cR{{\mathcal R}}
\def\cS{{\mathcal S}}
\def\cT{{\mathcal T}}
\def\cU{{\mathcal U}}
\def\cV{{\mathcal V}}
\def\cW{{\mathcal W}}
\def\cX{{\mathcal X}}
\def\cY{{\mathcal Y}}
\def\cZ{{\mathcal Z}}
\newcommand{\rmod}[1]{\: \mbox{mod} \: #1}

\def\cg{{\mathcal g}}

\def\vr{\mathbf r}

\def\e{{\mathbf{\,e}}}
\def\ep{{\mathbf{\,e}}_p}
\def\eq{{\mathbf{\,e}}_q}

\def\em{{\mathbf{\,e}}_m}

\def\Tr{{\mathrm{Tr}}}
\def\Nm{{\mathrm{Nm}}}

 \def\SS{{\mathbf{S}}}

\def\lcm{{\mathrm{lcm}}}

\def\({\left(}
\def\){\right)}
\def\l|{\left|}
\def\r|{\right|}
\def\fl#1{\left\lfloor#1\right\rfloor}
\def\rf#1{\left\lceil#1\right\rceil}
\def\flq#1{\langle #1 \rangle_q}

\def\mand{\qquad \mbox{and} \qquad}

\newcommand{\commA}[1]{\marginpar{%
\begin{color}{red}
\vskip-\baselineskip 
\raggedright\footnotesize
\itshape\hrule \smallskip A: #1\par\smallskip\hrule\end{color}}}

\newcommand{\commI}[1]{\marginpar{%
\begin{color}{magenta}
\vskip-\baselineskip 
\raggedright\footnotesize
\itshape\hrule \smallskip I: #1\par\smallskip\hrule\end{color}}}

\newcommand{\commJ}[1]{\marginpar{%
\begin{color}{blue}
\vskip-\baselineskip 
\raggedright\footnotesize
\itshape\hrule \smallskip J: #1\par\smallskip\hrule\end{color}}}




\hyphenation{re-pub-lished}

\mathsurround=1pt

\def\bfdefault{b}
\overfullrule=5pt

\def \F{{\mathbb F}}
\def \K{{\mathbb K}}
\def \Z{{\mathbb Z}}
\def \Q{{\mathbb Q}}
\def \R{{\mathbb R}}
\def \C{{\mathbb C}}
\def\Fp{\F_p}
\def \fp{\Fp^*}

\def\leg#1{\(\frac{#1}{p}\)}
\newcommand{\legendre}[2]{\genfrac{(}{)}{}{}{#1}{#2}}

\def\Smn{S_{k,\ell,q}(m,n)}

\def\Kmn{\cK_p(m,n)}
\def\psmn{\psi_p(m,n)}

\def\SM{\cS_{k,\ell,q}(\cM)}
\def\SMN{\cS_{k,\ell,q}(\cM,\cN)}
\def\SAMN{\cS_{k,\ell,q}(\cA;\cM,\cN)}
\def\SABMN{\cS_{k,\ell,q}(\cA,\cB;\cM,\cN)}

\def\SIJq{\cS_{k,\ell,q}(\cI,\cJ)}
\def\SAJq{\cS_{k,\ell,q}(\cA;\cJ)}
\def\SABJq{\cS_{k,\ell,q}(\cA, \cB;\cJ)}

\def\sM{\cS_{k,q}^*(\cM)}
\def\sMN{\cS_{k,q}^*(\cM,\cN)}
\def\sAMN{\cS_{k,q}^*(\cA;\cM,\cN)}
\def\sABMN{\cS_{k,q}^*(\cA,\cB;\cM,\cN)}

\def\sIJq{\cS_{k,q}^*(\cI,\cJ)}
\def\sAJq{\cS_{k,q}^*(\cA;\cJ)}
\def\sABJq{\cS_{k,q}^*(\cA, \cB;\cJ)}
\def\sABJp{\cS_{k,p}^*(\cA, \cB;\cJ)}

 \def \xbar{\overline x}

\title[Approximations to Fekete Polynomials]{Power Series Approximations to Fekete Polynomials}

\author[J. Bell]{Jason Bell }
\address{Department of Pure Mathematics, University of Waterloo,
Waterloo, Ontario, N2L 3G1, Canada}
\email{Jason Bell <jpbell@uwaterloo.ca>}

\author[I. E.~Shparlinski]{Igor E. Shparlinski}
\address{School of Mathematics and Statistics, University of New South Wales,
 Sydney, NSW 2052, Australia}
 
 \email{Igor E. Shparlinski <igor.shparlinski@unsw.edu.au>}

\begin{abstract} We study how well Fekete polynomials
$$
F_p(X) = \sum_{n=0}^{p-1} \leg{n} X^n \in \Z[X]
$$
with the coefficients given by Legendre symbols modulo a prime $p$, 
can be approximated by power series representing algebraic functions of 
a given degree. We also obtain some explicit results describing polynomial recurrence relations
which are satisfied by the coefficients of such algebraic functions. 
 \end{abstract}

\keywords{Fekete polynomial, Legendre symbol, algebraic function, polynomial recurrences}
\subjclass[2010]{Primaly: 11C08, 13F25; Secondary 11B37, 11L40}

\maketitle

\section{Introduction}
\label{sec:intro}

\subsection{Background and motivation}

For a prime $p\ge 3$ we recall that the {\it  Fekete polynomial\/}
$$
F_p(X) = \sum_{n=0}^{p-1} \leg{n} X^n \in \Z[X]
$$
is the polynomial of degree $p-1$
with coefficients given by Legendre symbols modulo  $p$.

Starting with the work of Conrey, Granville,   Poonen and Soundararajan~\cite{CGPS}, 
much attention has been devoted to analytic properties of these polynomials, such as 
the distribution of zeros and relations between various norms, see~\cite{Erd1,Erd2,ErdLub,GuSc} 
and references therein. 

Here we consider an apparently new question about the algebraic nature of Fekete polynomials.
Namely for an integer $N\ge 2$ we denote by $d_p(N)$ the smallest $d$ such that 
$$
F_p(X) \equiv G(X) \pmod {X^N}
$$
for some formal power series 
 \begin{equation}
\label{eq:GX}
G(X) = \sum_{n=0}^\infty A_n X^n \in \C[[X]]
\end{equation}
satisfying a nontrivial polynomial equation 
 \begin{equation}
\label{eq:hXGX}
h(X,G(X)) = 0
\end{equation}
for some polynomial $h(X,Y)\in \C[X,Y]$ of degree at most  $d$ in each variable. 
Clearly, we can always assume that $h$ is {\it irreducible\/}.

Just to show that this question has intrinsic number theoretic flavour, we note that 
until the Burgess~\cite{Burg} bound on the smallest quadratic nonresidue is improved
we cannot rule out that 
$$
d_p\(\fl{p^{1/4e^{1/2}}}\)=1, 
$$
corresponding to the function 
$$
G(X) = \frac{X}{1-X} =  \sum_{n=1}^\infty   X^n \in \Z[[X]].
$$
We also remark that the irrationality and transcendence of power series of multiplicative functions have been studied in a number of works, see~\cite{BaLuSh,BeBrCo,BoCo}.
However the question about the degree of approximating algebraic functions seems to be new. 

Here we obtain nontrivial lower bounds on  $d_p(N)$ starting from the values 
$N\ge p^{1/2}(\log p)^{1+\varepsilon}$ for any fixed $\varepsilon > 0$.

\subsection{Approach}

Our approach is based on first showing that the coefficients of powers series~\eqref{eq:GX} of algebraic 
functions satisfy a polynomial recurrence relation of the form
$$
\sum_{j=0}^L A_{n+j} P_j(n) = 0, \qquad n =0,1,\ldots, 
$$
with polynomials $P_j(T) \in \C[T]$ and give explicit bounds on the order $L$ of the relation 
and the largest degree $D$ of the polynomials $P_j$, $j=0, \ldots, L$, as the function of the degree
of $h$ in~\eqref{eq:hXGX}. 
We remark that, although in qualitative form, this is a known fact, we are unaware of explicit bounds on $D$ and
$L$ being given in the literature. So, here we fill this gap.

After this we also show that sequences that have enough oscillation 
cannot satisfy polynomial recurrences. 

Having a result of this type we then use character sums to show that 
the sequence of Legendre symbols satisfies this oscillatory property
on any interval of length  $N\ge p^{1/2+\varepsilon}$ with a fixed $\varepsilon > 0$.
This leads to a desired result. 

\subsection{General notation}

Throughout the paper,  as usual $A\ll B$  is equivalent to the inequality $|A|\le cB$
with some absolute constant $c>0$.

For a polynomial $f(X,Y)\in \C[X,Y]$ we use $\deg_X f$ and $\deg_Y f$ to
denote the degrees of $f$ with respect to $X$ and $Y$, respectively, 
reserving $\deg f$ for the total degree. 

Furthermore we also use $\cD_X$ and $\cD_T$ for partial differentiation operators 
 \begin{equation}
\label{eq:DiffOp}
\cD_X \Psi= \frac{\partial \Psi(X,T)}{\partial X} \mand 
\cD_T \Psi = \frac{\partial \Psi(X,T)}{\partial T}.
\end{equation}

\subsection{Degree of approximation}

Our main result is the following bound:

\begin{theorem}
\label{thm:Bound}  For any  sufficiently large prime  $p$ and positive integer  $N < p$ we have
$$
d_p(N)\gg  \(\frac{N}{p^{1/2}\log\, p}\)^{1/10}. 
$$
\end{theorem}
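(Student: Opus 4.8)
The plan is to contradict the hypothesis that $F_p(X)$ agrees to high order with an algebraic power series of small degree, by combining two ingredients that the introduction advertises: a structural fact about coefficients of algebraic functions, and an oscillation estimate for Legendre symbols coming from character sums. Suppose $d_p(N)=d$, so there is an irreducible $h(X,Y)\in\C[X,Y]$ of degree at most $d$ in each variable, and an algebraic power series $G(X)=\sum_n A_n X^n$ with $h(X,G(X))=0$, such that $A_n=\leg{n}$ for all $0\le n<N$. The first step is to invoke the promised explicit bound showing that the coefficients $A_n$ satisfy a linear recurrence $\sum_{j=0}^L A_{n+j}P_j(n)=0$ with polynomial coefficients $P_j(T)\in\C[T]$, where the order $L$ and the maximal degree $D$ of the $P_j$ are bounded polynomially in $d$. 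I expect bounds of the rough shape $L\ll d^2$ and $D\ll d^2$, so that the total ``complexity'' of the recurrence is $O(d^c)$ for a small explicit constant $c$.

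Next I would turn the recurrence into a statement that the sequence $(A_n)$ cannot oscillate too much on the range where it equals $\leg{n}$. The key heuristic is that a $P$-recursive sequence of order $L$ and degree $D$ is, on any block of length exceeding roughly $L+D$, strongly constrained: once $n$ exceeds the largest real zero of the leading polynomial $P_L(T)$, the recurrence lets one solve for $A_{n+L}$ in terms of the previous $L$ values, so the whole tail is determined by finitely many initial data. The oscillatory obstruction should say: a $\pm1$-valued (or more precisely, a boundedly-valued) sequence satisfying such a recurrence must be eventually governed by this rigidity, and hence cannot exhibit the amount of cancellation that genuine Legendre symbols do. Concretely, I would extract from the recurrence a bound on how many sign changes, or how large a cancelling character-sum, the segment $A_0,\dots,A_{N-1}$ can support, obtaining something like an admissible oscillation of size $O\!\left((LD)^{O(1)}\right)=O(d^{O(1)})$.

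The third step supplies the actual oscillation of Legendre symbols via character sums. The Pólya--Vinogradov inequality, or better the Burgess bound already referenced through~\cite{Burg}, guarantees that the partial sums $\sum_{n\le M}\leg{n}$ are small, of size $O(p^{1/2}\log p)$, so the sequence $\leg{n}$ genuinely oscillates on every subinterval of length $N$ once $N$ is a bit larger than $p^{1/2}\log p$. I would quantify this so that the Legendre sequence on $[0,N)$ forces an oscillation of size at least a fixed power of $N/(p^{1/2}\log p)$. Matching this against the upper bound $O(d^{O(1)})$ from the recurrence yields $d^{O(1)}\gg N/(p^{1/2}\log p)$, and solving for $d=d_p(N)$ produces the stated lower bound $d_p(N)\gg\bigl(N/(p^{1/2}\log p)\bigr)^{1/10}$, the exponent $1/10$ reflecting the aggregate polynomial losses in $L$, $D$, and the oscillation-to-complexity comparison.

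The main obstacle, I expect, is the middle step: making precise and quantitative the claim that a bounded $P$-recursive sequence cannot oscillate. Passing from the qualitative ``algebraic implies $P$-recursive'' fact to a usable \emph{quantitative} oscillation bound requires controlling the leading coefficient $P_L$ and its zeros, and ruling out degenerate cancellations among the $P_j(n)$; the delicate point is that the recurrence only takes hold for $n$ beyond the zeros of $P_L$, so one must separately handle an initial segment of length $O(D)$ and argue that the remaining long range is rigid enough to preclude the Legendre-type cancellation. Carefully bookkeeping these parameters, and ensuring the exponents combine to give the clean $1/10$, is where the real work lies; the character-sum input and the reduction from $d_p(N)$ are comparatively standard.
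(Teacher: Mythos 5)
Your first step matches the paper's Proposition~\ref{prop:PolyRec} (which gives $L\le 4d^2$ and $\deg_T P_j\le 3(d+1)^2$), and your final bookkeeping is essentially how the paper assembles the exponent $1/10$. The genuine gap is your middle step, and it cannot be repaired in the form you state it. Your claim is that a bounded sequence satisfying a low-complexity polynomial recurrence ``cannot exhibit the amount of cancellation that genuine Legendre symbols do,'' where cancellation means smallness of the partial sums $\sum_{n\le M}A_n$ --- which is exactly the input your third step supplies via P\'olya--Vinogradov/Burgess. This principle is false: the sequence $A_n=(-1)^n$ is $\pm1$-valued, has partial sums bounded by $1$ (maximal cancellation), and satisfies the order-one constant-coefficient recurrence $A_{n+1}+A_n=0$. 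So no argument combining only ``recurrence of complexity $d^{O(1)}$'' with ``small partial sums of the sequence itself'' can reach a contradiction; and the rigidity heuristic (tail determined by initial data once $n$ passes the zeros of $P_L$) is not an obstruction to oscillation at all, since holonomic sequences such as $\omega^n$ oscillate perfectly well.

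What the paper actually proves (Proposition~\ref{prop:HAT}) is an $L^2$ statement with different hypotheses: (i) the sequence has full $L^2$-mass, $\sum_{n=a}^b|f(n)|^2\ge\kappa(b-a)$, and (ii) \emph{all shifted autocorrelations} $\sum_{n=a}^{b} f(n+j)\overline{f(n+h)}$ with $j<h\le m$ are at most $\tau$ in absolute value. One expands
$$
\sum_{n=a}^b \Bigl|\sum_{j=1}^m Q_j(n)f(n+j)\Bigr|^2,
$$
bounds the diagonal from below using (i) together with a choice of interval $[a,b]$ kept away from the zeros and critical points of the $Q_j$ (Lemmas~\ref{lem: 1}--\ref{lem: 4}), and bounds the off-diagonal from above using partial summation and (ii); this forces $\Delta(n)\ne0$ for some $n\ll\tau\kappa^{-1}D^3m^4$. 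Consequently the arithmetic input is \emph{not} a bound on $\sum_{n\le M}\leg{n}$, but the Weil bound for incomplete sums of $\leg{(n+j)(n+h)}$, i.e.\ character sums with \emph{quadratic} arguments (the paper's Lemma~\ref{lem:IncompSum}); note that $(-1)^n$ fails hypothesis (ii) badly, which is precisely what excludes it. This distinction also explains the paper's closing remark that going below the $p^{1/2}$ threshold is blocked by the lack of nontrivial bounds for short \emph{quadratic} character sums --- an obstruction your linear-sum input could never see.
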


The proof rests on  Propositions~\ref{prop:PolyRec}  and~\ref{prop:HAT}, 
given in Sections~\ref{eq:poly rec} and~\ref{sec:appox},
respectively, and which are of a rather general
nature and thus can be
of independent interest. 

\section{Algebraic functions and  holonomic sequences}
\label{eq:poly rec}

\subsection{Explicit bounds on the order and degree of  polynomial recurrences}
As we have mentioned, qualitatively the fact that the coefficients of algebraic functions 
are {\it holonomic\/}---that is, they satisfy a recurrence relation with polynomial coefficients---is very well known. However, no quantitative form has been reported in the literature, so 
we now present such a result. 

\begin{prop}
\label{prop:PolyRec}  If a power series $G(X)$ of the form~\eqref{eq:GX} satisfies~\eqref{eq:hXGX} for
some irreducible 
 polynomial $h(X,Y)\in \C[X,Y]$  of total degree at most $d\ge 2$, 
then the coefficients of $G(X)$ satisfy a relation
$$
\sum_{j=0}^L A_{n+j} P_j(n) = 0, \qquad n =0,1,\ldots, 
$$
with polynomials $P_j(T) \in \C[T]$ with a nonzero polynomial $P_L(T)$ 
and such that 
$$
L \le 4d^2 \mand \deg_T P_j \le 3(d+1)^2, \quad j =0, \ldots, L.
$$
\end{prop}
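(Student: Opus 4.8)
The plan is to prove that $G$ is \emph{holonomic} with fully explicit control on the underlying linear differential equation, and then to read off the recurrence by extracting the coefficient of $X^n$. First, since $\C$ has characteristic zero and $h$ is irreducible, $h$ is separable as a polynomial in $Y$, so $\partial h/\partial Y(X,G)\neq 0$, and $G$ is algebraic over $\C(X)$ of degree $D=\deg_Y h\le d$. Thus $K=\C(X)(G)$ is a $\C(X)$-vector space of dimension $D$ with basis $1,G,\dots,G^{D-1}$. Implicit differentiation of $h(X,G)=0$ gives
$$
G' = -\frac{(\partial h/\partial X)(X,G)}{(\partial h/\partial Y)(X,G)}\in K ,
$$
and since $K=\C(X)[G]$ is generated by $G$, the chain rule shows that $K$ is closed under $d/dX$. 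In particular all derivatives $G^{(k)}$ lie in $K$.

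Next I would produce the differential equation from a linear dependence. The $D+1$ elements $G^{(0)},\dots,G^{(D)}$ live in the $D$-dimensional space $K$, hence are linearly dependent over $\C(X)$. To make this effective I would invert $\partial h/\partial Y(X,G)$ in $K$ through a B\'ezout identity $a\,h+b\,\partial h/\partial Y=R$, where $R=\mathrm{Res}_Y(h,\partial h/\partial Y)$ is the discriminant-type denominator, with $\deg_X R\le (2D-1)d$. Writing each $G^{(k)}=R^{-e_k}\sum_{i=0}^{D-1}u_{k,i}(X)G^i$ with $e_k=O(k)$ and tracking $\deg_X u_{k,i}$ inductively, I clear the common denominator and regard $G^{(0)},\dots,G^{(D)}$ as $D+1$ vectors in $\C[X]^D$. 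A nonzero $\C[X]$-linear relation among them is then furnished by the maximal minors, which yields an identity
$$
\sum_{k=0}^{r} q_k(X)\,G^{(k)} = 0,\qquad q_k\in\C[X],\quad q_r\neq 0,
$$
of order $r\le D\le d$, whose coefficients satisfy an explicit bound $\deg_X q_k\le s$ coming from the size of those determinants.

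Finally I would convert this equation into a recurrence. Writing $q_k(X)=\sum_i q_{k,i}X^i$ and $G^{(k)}=\sum_n (n+1)\cdots(n+k)\,A_{n+k}X^n$, extracting the coefficient of $X^N$ and grouping by the shift $m=k-i$ produces a relation $\sum_m A_{N+m}\,c_m(N)=0$, where each $c_m(N)$ is a $\C$-combination of falling factorials, hence a polynomial in $N$ of degree at most $r$, while the shifts $m$ fill an interval of length at most $r+s$. Reindexing to nonnegative shifts gives the asserted form with $L\le r+s$ and $\deg_T P_j\le r$, and $P_L\neq 0$ since, at the extreme shift, the contributing term of highest order $k$ supplies a falling factorial of strictly largest degree in $N$, precluding cancellation. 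Substituting $r\le d$ and the bound for $s$ then gives $L\le 4d^2$ and $\deg_T P_j\le 3(d+1)^2$. The main obstacle is exactly the explicit degree accounting in the middle step: a naive iteration of the discriminant denominator only yields $s=O(d^3)$, and obtaining the quadratic bound needed here requires exploiting the finite dimensionality of $K$ carefully---keeping the denominator at a bounded power of $R$ and estimating the maximal minors sharply---which is the quantitative heart that the qualitative literature leaves implicit.
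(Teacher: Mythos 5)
Your overall architecture coincides with the paper's: first produce a linear ODE with polynomial coefficients for $G$, then read off a polynomial recurrence for the $A_n$ by extracting the coefficient of $X^n$. Your final conversion step is sound, and it is the same computation the paper performs: if the ODE has order $r$ and coefficient degrees at most $s$, the shifts fill an interval of length at most $r+s$, the recurrence coefficients are combinations of falling factorials of degree at most $r$, and the extreme nonzero shift survives because falling factorials of distinct degrees cannot cancel; this yields $L\le r+s$ and $\deg_T P_j\le r$. The genuine gap is the middle step, and you have named it yourself: you never prove the quadratic bound $s=O(d^2)$, you only assert that it should follow from ``keeping the denominator at a bounded power of $R$ and estimating the maximal minors sharply.'' That bound is not a routine verification to be deferred; it is precisely the content of the theorem of Bostan, Chyzak, Lecerf, Salvy and Schost, \cite[Theorem~2]{BCLSS}, which the paper quotes as Lemma~\ref{lem:DegLength} and whose invocation, followed by the conversion you describe, \emph{is} the paper's entire proof of Proposition~\ref{prop:PolyRec}.

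Moreover, your specific route makes the deferred step not merely hard but most likely impossible. You insist on the minimal-order equation, of order $r\le \deg_Y h\le d$, and hope for coefficient degree $O(d^2)$ there. But the point of \cite[Theorem~2]{BCLSS} is a trade-off between order and degree: the quadratic bound $D\le 3\deg_X h\,\deg_Y h$ is achieved only by \emph{relaxing} the order to $N\le 6\deg_Y h$, i.e.\ by working with a non-minimal equation. For the minimal-order equation, the available degree bounds are cubic in $d$ (exactly as your own B\'ezout/resultant accounting shows), and the motivation given in~\cite{BCLSS} for permitting non-minimal order is that this cubic growth is genuinely there in general, so no sharper estimation of minors will rescue the minimal-order approach. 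To close the gap you should abandon minimality, take $r\le 6d$ and $s\le 3d^2$ from Lemma~\ref{lem:DegLength} (or reprove that lemma, which is the real work), after which your own conversion gives $L\le r+s\le 6d+3d^2\le 3(d+1)^2$ and $\deg_T P_j\le 6d$, matching what the paper's proof actually delivers.
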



\subsection{Differential equation for algebraic functions}
We first recall a result of Bostan, Chyzak, Lecerf, Salvy and Schost~\cite[Theorem~2]{BCLSS}
which descripes the shape of differential equations for algebraic functions.

\begin{lem}
\label{lem:DegLength}
Let $K$ be a field of characteristic zero.   If a power serie
$$
G(X) = \sum_{n=0}^\infty A_n X^n \in K[[X]]
$$
satisfies~\eqref{eq:hXGX} for
some irreducible 
 polynomial $h(X,Y)\in \C[X,Y]$  of total degree at most $d\ge 2$, 
then   $G(X)$ satisfies a differential equation of the form
$$
\sum_{i=0}^N Q_i(X) G^{(i)}(X) = 0
$$
of order $N \le 6\deg_Y h$ with polynomials $Q_i(X) \in K[X]$,  
$i =0, \ldots, N$,  of degree at most $D$, where 
$$D \le 3\deg_X h \deg_Y h 
$$
with $Q_N$ nonzero. 
\end{lem}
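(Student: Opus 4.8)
The plan is to produce the differential equation directly from the algebraic relation $h(X,G)=0$ by a linear-algebra argument over the rational function field, quantifying the degrees and the order at each step. First I would work inside the field $\mathcal{F}=K(X)[G]$, which is a finite algebraic extension of $K(X)$ of degree $m=\deg_Y h$, since $h$ is irreducible and $G$ is one of its roots. The key observation is that differentiating $G$ keeps us inside $\mathcal{F}$: implicit differentiation of $h(X,G)=0$ gives
$$
\cD_X h(X,G) + \cD_T h(X,G)\, G' = 0,
$$
so $G' = -\cD_X h(X,G)/\cD_T h(X,G) \in \mathcal{F}$, and inductively every derivative $G^{(i)}$ lies in $\mathcal{F}$. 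Thus $1, G', G'', \dots, G^{(N)}$ are $N+1$ elements of the $m$-dimensional $K(X)$-vector space $\mathcal{F}$, and once $N\ge m$ they must be linearly dependent over $K(X)$, yielding a relation $\sum_{i} Q_i(X) G^{(i)}(X)=0$ with $Q_i \in K(X)$, which we clear of denominators to get $Q_i\in K[X]$.

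The main work, and the main obstacle, is the quantitative bookkeeping: I need to track the degrees of the numerators and denominators of the $G^{(i)}$ when each is written in the normal form $G^{(i)} = R_i(X,G)/\cD_T h(X,G)^{\,2i-1}$ for a polynomial $R_i$, and then control the degrees arising in the linear-algebra step. Here I would set up the recursion for $G^{(i)}$ explicitly: writing $G^{(i)}=R_i(X,G)/\Delta^{\,c_i}$ with $\Delta=\cD_T h(X,G)$, differentiating once multiplies the denominator by $\Delta$ and introduces $\cD_X\Delta$, $\cD_T\Delta$ and the chain-rule factor $G'$, so I can read off how $c_i$ grows (linearly in $i$) and how $\deg_X R_i$, $\deg_Y R_i$ grow. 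Reducing $R_i$ modulo $h$ in the $Y$-variable keeps $\deg_Y$ below $m$ at the cost of a controlled increase in $\deg_X$, governed by the resultant/division bounds $\deg_X(\text{quotient})\le \deg_X R_i + \deg_X h\cdot \deg_Y R_i$. This is exactly the step that is delicate, because a naive bound blows up too fast; the factor $3$ and the product $\deg_X h\,\deg_Y h$ in the claimed bound $D\le 3\deg_X h\,\deg_Y h$ come from balancing these reductions carefully.

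For the linear-dependence step I would express each $G^{(i)}$, after reduction, as a $K(X)$-combination $\sum_{k=0}^{m-1} a_{i,k}(X) G^k$ over the power basis $1,G,\dots,G^{m-1}$ of $\mathcal{F}$, with common denominator a power of $\Delta$ reduced modulo $h$ (equivalently, a power of the discriminant-type quantity $\operatorname{Res}_Y(h,\cD_T h)\in K[X]$). A nontrivial $K(X)$-linear relation among the vectors $(a_{i,0},\dots,a_{i,m-1})$ for $i=0,\dots,N$ exists as soon as $N\ge m=\deg_Y h$; this already gives the order bound, and the factor $6$ in $N\le 6\deg_Y h$ leaves room for the bound to survive the bound on $\deg_X h$ appearing in the denominators. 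To extract $Q_i$ with the degree bound $D$, I would solve the dependence by Cramer's rule over $K(X)$: the $Q_i$ are (up to the common denominator) $m\times m$ minors of the coefficient matrix $(a_{i,k})$, and their degrees are bounded by summing the individual $\deg_X a_{i,k}$, which I have already controlled via the recursion. That the leading coefficient $Q_N$ can be taken nonzero follows by choosing the dependence to involve the highest index, i.e.\ taking $N$ minimal so that $1,G',\dots,G^{(N)}$ are dependent while $1,G',\dots,G^{(N-1)}$ are independent. Since the statement is quoted from \cite[Theorem~2]{BCLSS}, I would ultimately cite that reference for the sharp constants and present the argument above as the structural justification.
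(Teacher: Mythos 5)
The paper never proves this lemma: it is quoted verbatim from~\cite[Theorem~2]{BCLSS}, so the paper's entire ``proof'' is the citation with which you end your sketch. The real question is whether your structural argument could stand on its own, and it cannot: there is a genuine quantitative gap. Your plan finds a linear dependence as soon as $N\ge m=\deg_Y h$, i.e.\ it produces an ODE of order at most $m$, and then extracts the $Q_i$ by Cramer's rule over $K(X)$ from the power-basis representation of the $G^{(i)}$. Run your own bookkeeping: $G^{(i)}=R_i(X,G)/(\cD_T h(X,G))^{2i-1}$ with $\deg_X R_i$, $\deg_Y R_i$ of order $i\deg_X h$ and $i\deg_Y h$ respectively, and each reduction of one $Y$-degree modulo $h$ costs about $\deg_X h$ in $X$-degree, so the reduced coordinates $a_{i,k}(X)$ have degree of order $i\,\deg_X h \deg_Y h$. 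The $m\times m$ minors then have degree of order $(\deg_Y h)^3\deg_X h$ (perhaps $(\deg_Y h)^2\deg_X h$ after optimal balancing), but never $3\deg_X h\deg_Y h$, which is \emph{linear} in $\deg_Y h$. This is not a matter of careful balancing: the dependence among $G,G',\ldots,G^{(m)}$ is (generically) unique up to a rational factor, so your construction yields the differential resolvent, whose normalized coefficient degree is genuinely quadratic in $\deg_Y h$ in the worst case. That is precisely the point of~\cite{BCLSS}: the factor $6$ in $N\le 6\deg_Y h$ is not ``room to spare,'' as your sketch treats it, but the price paid for a degree bound linear in $\deg_Y h$. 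The tradeoff is obtained there by a different linear-algebra argument---one fixes target bounds $N$ and $D$, regards the $(N+1)(D+1)$ coefficients of the unknown polynomials $Q_i$ as unknowns over the constant field, and counts monomials $X^aG^b$ of controlled bidegree to force a nontrivial solution---rather than Cramer's rule over $K(X)$.

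Two smaller points. First, the family you propose to test for dependence should be $G, G',\ldots,G^{(N)}$, not $1,G',\ldots,G^{(N)}$: including $1$ produces an inhomogeneous relation $Q_0+\sum_{i\ge 1}Q_iG^{(i)}=0$, which is not of the stated form (the $i=0$ term must be $Q_0G$). Second, your device for ensuring $Q_N\ne 0$ (take $N$ minimal) is fine and is standard. In summary: as a self-contained proof your proposal does not, and cannot, deliver the stated pair of bounds; as an appeal to~\cite[Theorem~2]{BCLSS} it coincides with what the paper itself does, in which case the intermediate derivation should be dropped rather than presented as justification for the constants.
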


\subsection{Proof of Proposition~\ref{prop:PolyRec}}

The differential equation of Lemma~\ref{lem:DegLength},   gives us a polynomial recurrence 
for  the coefficients $A_n$ of $T^n$ in $G(T)$ as in~\eqref{eq:GX}. More precisely, we have
$$
\sum_{j=0}^{N+D} P_j(n) A_{n+i}=0, \qquad n = 0, 1, \ldots 
$$
 with polynomials $P_j(T) \in \C[T]$, $j =0, \ldots, N+D$,  of degree at most $D$.
 We have $D \le 3d^2$, $N \le 6d$ and thus $N + D \le 3(d+1)^2$.   The result now follows.


\section{Approximation by holonomic sequences}
\label{sec:appox}

\subsection{Holonomic approximation}

We are now able to establish our main technical result. As we have mentioned 
we present it in a general form, which makes it suitable for applications to various arithmetic functions. 

As usual, for a complex $z$, we use $\overline z$ to denote its complex conjugate.

\begin{prop}\label{prop:HAT}  
Let $m$ and $D$ be positive integers and let $\kappa$ and $\tau$ be positive real numbers with
 $4\tau \ge \kappa$, let $Q_1(X),\ldots ,Q_m(X)$ be complex polynomials of degree at most $D$, and let $f:\Z\to \C$.  
Suppose that $f(n)$ satisfies:
\begin{enumerate}
\item[(i)] $|f(a)|^2+\cdots + |f(b)|^2 \ge \kappa(b-a)$ for $b>a$;
\item[(ii)] $\left|\sum_{n=a}^b f(n+j) \overline{f(n+h)}\right| \le \tau$
for all integers $a<b$ and $j<h \le m$. 
\end{enumerate}
Then for $$
\Delta(n) = \sum_{j=1}^m Q_j(n)f(n+j)$$ 
we have $\Delta(n) \ne 0$ for some natural number  $n<168 \tau \kappa^{-1} e^3 D^3 m^4$. 
\end{prop}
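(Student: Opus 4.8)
The plan is to argue by contradiction. Suppose $\Delta(n)=0$ for all $n$ in a long initial segment, say $0 \le n < M$ where $M = \lfloor 168\tau\kappa^{-1}e^3 D^3 m^4 \rfloor$. I want to extract a quantitative contradiction between the vanishing of $\Delta$ and the oscillation hypotheses (i)--(ii). The natural device is to form the squared $L^2$-norm of $\Delta$ over this segment and expand it in two different ways.

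Let me explore the core mechanism.

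Let me write $S = \sum_{n=0}^{M-1} |\Delta(n)|^2$. If all $\Delta(n)$ vanish then $S=0$. On the other hand, expanding the square gives
$$
S = \sum_{n=0}^{M-1} \sum_{j=1}^m \sum_{h=1}^m Q_j(n)\overline{Q_h(n)}\, f(n+j)\overline{f(n+h)}.
$$
The idea is that the diagonal terms $j=h$ are forced to be large by hypothesis (i), while the off-diagonal terms $j\ne h$ are controlled by hypothesis (ii). For the diagonal I would isolate the contribution of the single index $j^\*$ for which $Q_{j^\*}$ is, in an appropriate sense, the "leading" coefficient polynomial, so that $\sum_n |Q_{j^\*}(n)|^2 |f(n+j^\*)|^2$ is genuinely of size comparable to $\kappa M$ times the typical magnitude of $|Q_{j^\*}|^2$; this is where (i) enters, guaranteeing that $f$ cannot be too small on average. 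For the off-diagonal terms, the sum $\sum_n Q_j(n)\overline{Q_h(n)} f(n+j)\overline{f(n+h)}$ must be bounded by summation by parts: the polynomial weight $Q_j\overline{Q_h}$ is slowly varying (its successive differences drop the degree), while the hypothesis (ii) bounds the partial sums $\sum_{n=a}^{b} f(n+j)\overline{f(n+h)}$ by $\tau$ uniformly. Abel summation then converts a bound of $\tau$ on partial character-type sums, together with the total variation of a degree-$2D$ polynomial over $[0,M)$, into a bound of roughly $\tau \cdot (\text{size of }|Q_j \overline{Q_h}| \text{ plus its variation})$ on each off-diagonal term.

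The technical heart, and the step I expect to be the main obstacle, is \emph{normalization of the polynomials} $Q_j$ so that the diagonal lower bound genuinely dominates the off-diagonal upper bound. Because the $Q_j$ are arbitrary of degree $\le D$, one cannot compare their sizes pointwise; instead I would normalize by the supremum, picking $j^\*$ and a subinterval of length $\gg M$ on which $|Q_{j^\*}|$ is bounded below by a fixed fraction of its maximum (a polynomial of degree $D$ is large on a positive-proportion subinterval, with the proportion controlled by $D$—this is where the factors of $e^3$ and $D^3$ will come from, via an estimate on how concentrated the sublevel set of a degree-$D$ polynomial can be, e.g. a Remez-type or Chebyshev-type inequality). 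On that subinterval the diagonal term for $j^\*$ is $\gg \kappa M \cdot \max|Q_{j^\*}|^2 / (\text{poly in } D)$, while every off-diagonal term is $\ll \tau\, m \max_j\max|Q_j|^2 \cdot (\text{poly in } D)$ after Abel summation. Choosing $M$ a large enough multiple of $\tau\kappa^{-1}$ times the relevant powers of $D$ and $m$ makes the diagonal strictly exceed the sum of the $m^2$ off-diagonal contributions, forcing $S>0$ and contradicting $\Delta\equiv 0$. Tracking the constants carefully—the $168$, the $e^3$, the exact powers $D^3 m^4$—through the Remez inequality and the double Abel summation is the part that demands real care; the structural argument itself is the two-way estimate of $S$.
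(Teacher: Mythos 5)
Your plan is correct in substance, and it shares the paper's core mechanism --- expand $\sum_n |\Delta(n)|^2$, lower-bound the diagonal via hypothesis (i), and control each off-diagonal term by Abel summation against the partial-sum bound (ii), paying a factor $O(D)$ for the total variation of the degree-$\le 2D$ weight $Q_j\overline{Q_h}$ (split the range into $O(D)$ pieces of monotonicity of its real and imaginary parts) --- but you resolve the key normalization difficulty by a genuinely different device. The paper never localizes at a maximum of any $Q_j$: instead (Lemma~\ref{lem: 4}) it constructs, by a recursive pigeonhole over the $O(Dm^2)$ forbidden points (roots of the $Q_i$ and critical points of all products $Q_iQ_j$), an interval $[a,b]$ positioned so far from the origin relative to its length that \emph{every} $Q_j$ is constant up to a factor $e$ on it (Lemma~\ref{lem: 1}); diagonal dominance is then immediate, but this far-out placement, with $L=9m^2$ and $2Dm^2$ points to avoid, is exactly what produces the $m^4$ in the final bound. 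You instead stay inside the initial segment $[0,M)$, pick $j^*$ maximizing $\max_{[0,M]}|Q_j|$, and localize the diagonal on a subinterval where $|Q_{j^*}|$ is comparable to its maximum; the off-diagonal terms are estimated over the whole segment and compared to $\max|Q_{j^*}|^2$. If executed, this is shorter and actually sharper: the diagonal over the subinterval is $\gtrsim \kappa (M/D^2)\max|Q_{j^*}|^2$ while the $\le m^2$ off-diagonal terms total $\lesssim \tau D m^2 \max|Q_{j^*}|^2$, giving a contradiction once $M \gg \tau\kappa^{-1}D^3m^2$, i.e.\ $m^2$ in place of the paper's $m^4$. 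One caution: the sublevel-set (Remez) formulation you invoke is not quite the right tool, because hypothesis (i) yields lower bounds only over \emph{intervals}, not over arbitrary measurable sets; what you need is the interval version, which follows from Markov's inequality --- the derivative of the real polynomial $|Q_{j^*}|^2$ (degree $\le 2D$) is at most $(8D^2/M)\max_{[0,M]}|Q_{j^*}|^2$, so around its maximum point there is a subinterval of length $\ge M/(16D^2)$ on which $|Q_{j^*}|^2 \ge \tfrac12 \max_{[0,M]}|Q_{j^*}|^2$. With that substitution (and the harmless observation, implicit in the paper as well, that at least one $Q_j$ must be nonzero), your argument closes; what the paper's longer route buys is complete self-containment, avoiding any appeal to classical polynomial inequalities.
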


\subsection{General definitions and notation}

Given a polynomial $Q(X)\in \C[X]$ we create a subset of $\C$, $C(Q)$, which is defined as follows.  We write $Q(X)=A(X)+iB(X)$, 
where $A$ and $B$ are polynomials with real coefficients.  Then:
\begin{itemize}
\item if $A(X)$ and $B(X)$ are both nonconstant then we define $C(Q)$ to be the set of the zeros of $A'(X)B'(X)$;
\item  if $A(X)$ is constant and $B$ is nonconstant then $C(Q)$ is the set of zeros of $B'(X)$; if $A(X)$ is 
nonconstant and $B$ is constant then $C(Q)$ is the set of zeros of $A'(X)$; \item if $A$ and $B$ are constant then $C(Q)$ is the empty set.
\end{itemize}

\subsection{Bounds of some sums with polynomials} 

\begin{lem} \label{lem: 1} Let $D$ be a positive integer, let $\kappa>0$, let $Q(X)$ be a nonzero polynomial of degree at most $D$, and let $f:\Z\to \C$.  Suppose that $a$ and $b$ are natural numbers with $a<b$ and $L$ is a positive integer such that:
\begin{enumerate}
\item[(i)]  $|j-\lambda|\ge a/LD$ for $j\in \{a,a+1,\ldots ,b\}$ and for all roots $\lambda$ of $Q$;
\item[(ii)]  $(b-a)/a\le 1/LD^2$;
\item[(iii)]  $|f(a)|^2+\cdots +|f(b)|^2 \ge \kappa (b-a)$.
\end{enumerate}
Then 
$$
\sum_{j=a}^b |Q(j)|^2|f(j)|^2\ge \frac{(b-a)\kappa}{e^2} \max_{a\le j\le b} |Q(j)|^2.$$
\end{lem}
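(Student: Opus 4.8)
The plan is to show that $|Q(j)|$ is essentially constant across the short interval $\{a,a+1,\ldots,b\}$—varying by at most a multiplicative factor of $e$—so that the quantity $\max_{a\le j\le b}|Q(j)|^2$ can be pulled out of the sum and hypothesis (iii) applied directly. The set $C(Q)$ just defined plays no role here: condition (i) concerns the genuine roots of $Q$, and those are what I would use.

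First I would factor $Q$ over its roots, writing $Q(X)=c\prod_{\ell}(X-\lambda_\ell)$ with at most $D$ factors since $\deg Q\le D$. For any two indices $j,k\in\{a,\ldots,b\}$ I would compare $|Q(j)|$ and $|Q(k)|$ factor by factor through
$$
\frac{|j-\lambda_\ell|}{|k-\lambda_\ell|}=\left|1+\frac{j-k}{k-\lambda_\ell}\right|\le 1+\frac{|j-k|}{|k-\lambda_\ell|},
$$
which holds for complex roots by the triangle inequality. The two hypotheses are precisely what is needed to control the remaining fraction: by (i) we have $|k-\lambda_\ell|\ge a/(LD)$, while $|j-k|\le b-a$, so the fraction is at most $(b-a)LD/a$, and then (ii) gives $(b-a)/a\le 1/(LD^2)$, collapsing this bound to $1/D$.

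Multiplying over the at most $D$ factors then yields $|Q(j)|/|Q(k)|\le (1+1/D)^D\le e$ for every pair $j,k$ in the interval. Choosing $k$ to attain the maximum of $|Q|$ on $\{a,\ldots,b\}$ and taking reciprocals shows $|Q(j)|^2\ge e^{-2}\max_{a\le k\le b}|Q(k)|^2$ uniformly in $j$. Substituting this uniform lower bound into $\sum_{j=a}^b |Q(j)|^2|f(j)|^2$ and invoking (iii) gives exactly the claimed inequality.

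I do not expect a genuine obstacle; the lemma reduces entirely to the elementary ratio estimate above. The only points demanding care are verifying that conditions (i) and (ii) combine to produce precisely the factor $1/D$ in each term—so that the product telescopes to the clean constant $e$—and recalling that $(1+1/D)^D<e$ holds for every finite $D$, not merely in the limit.
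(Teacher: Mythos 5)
Your proof is correct and follows essentially the same route as the paper's: factor $Q$ over its roots, use hypotheses (i) and (ii) to bound each factor ratio by $1+1/D$, conclude $|Q(j)|/|Q(k)|\le(1+1/D)^D\le e$, and then apply (iii). The only cosmetic difference is that you establish the ratio bound uniformly for all pairs $j,k$, whereas the paper compares only the argmin and argmax of $|Q|$ on the interval; the substance is identical.
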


\begin{proof}
Let $\lambda_1,\ldots ,\lambda_r$ denote the roots of $Q(X)$ with multiplicity, with $r\le D$.  Then let $b_0$ be a value of $j\in \{a,\ldots ,b\}$ where the maximum of $|Q(j)|$ is attained and let $a_0$ be a value of $j$ for $j\in \{a,\ldots ,b\}$ where the minimum of $|Q(j)|$ is attained.
$$Q(b_0)/Q(a_0)=\prod_{j=1}^r (b_0-\lambda_j)/(a_0-\lambda_j) = \prod_{j=1}^r \left( 1+(b_0-a_0)/(a_0-\lambda_j) \right).$$
Now by assumption $ |a_0-\lambda_j| \ge a/LD$ for $j=1,\ldots ,r$ and so we obtain the inequality
$$|Q(b_0)|/|Q(a_0)| \le \prod_{j=1}^r \left( 1 + LD(b-a)/a\right).$$
Since $(b-a)/a\le 1/LD^2$, we now see that
 \begin{equation}
\label{eq:MinMaxQ}
 |Q(b_0)|/|Q(a_0)| \le (1+1/D)^r \le (1+1/D)^D \le e.
\end{equation}
We also  see that
\begin{align*}
 |Q(a)|^2|f(a)|^2 &+|Q(a+1)|^2 |f(a+1)|^2+\cdots + |Q(b)|^2 |f(b)|^2 \\ &\ge  |Q(a_0)|^2 (|f(a)|^2+\cdots +|f(b)|^2)  \ge
\kappa (b-a)  |Q(a_0)|^2.
\end{align*}
Thus, recalling~\eqref{eq:MinMaxQ}, we obtain the result. 
\end{proof}

\begin{lem}
Let $Q(X)$ be a complex polynomial, let $f:\Z\to \C$, and let $a$ and $b$ be nonnegative integers such that $[a,b]\cap C(Q)$ is empty. 
Suppose that $\tau>0$ is such that for every integer $n$ and for every $r\in \{a,a+1,\ldots ,b\}$ we have
$$
\left|\sum_{j=a}^r f(n+j)\right| \le \tau.
$$  
Then
$$
\left|\sum_{j=a}^b Q(j) f(n+j)\right| \le 4\tau(|Q(b)|+|Q(a)|).$$  
\label{lem: 2}
\end{lem}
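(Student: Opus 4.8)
The plan is to prove this by Abel summation (summation by parts), extracting the hypothesis $[a,b]\cap C(Q)=\emptyset$ only at one decisive point, namely to bound the discrete total variation of $Q$ along the integers of $[a,b]$.

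First I would fix $n$ and set, for $a\le r\le b$,
$$
S_r=\sum_{j=a}^r f(n+j),
$$
with the convention $S_{a-1}=0$, so that the hypothesis reads $|S_r|\le\tau$ for every $r\in\{a,\ldots,b\}$ and $f(n+j)=S_j-S_{j-1}$. Substituting and reindexing the telescoping sum gives
$$
\sum_{j=a}^b Q(j)f(n+j)=Q(b)S_b-\sum_{j=a}^{b-1}\bigl(Q(j+1)-Q(j)\bigr)S_j,
$$
whence, by the triangle inequality and $|S_r|\le\tau$,
$$
\left|\sum_{j=a}^b Q(j)f(n+j)\right|\le\tau\,|Q(b)|+\tau\sum_{j=a}^{b-1}\bigl|Q(j+1)-Q(j)\bigr|.
$$

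The crux is then to show that the discrete total variation $\sum_{j=a}^{b-1}|Q(j+1)-Q(j)|$ is at most $2\bigl(|Q(a)|+|Q(b)|\bigr)$, and this is where $[a,b]\cap C(Q)=\emptyset$ enters. Writing $Q=A+iB$ with $A,B\in\R[X]$ and applying the triangle inequality termwise bounds the variation of $Q$ by the sum of the variations of $A$ and of $B$. By the definition of $C(Q)$, the hypothesis forces the derivative of each nonconstant one among $A,B$ to be nonvanishing on $[a,b]$ (a constant part contributes only zero differences and may be discarded). Thus each of $A$ and $B$ is strictly monotone on $[a,b]$, so the consecutive differences $A(j+1)-A(j)$ all share a single sign, and likewise for $B$; each inner sum therefore collapses in absolute value,
$$
\sum_{j=a}^{b-1}|A(j+1)-A(j)|=|A(b)-A(a)|,\qquad\sum_{j=a}^{b-1}|B(j+1)-B(j)|=|B(b)-B(a)|.
$$

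Finally I would close with the elementary bounds $|A(x)|\le|Q(x)|$ and $|B(x)|\le|Q(x)|$ at the real points $x=a,b$, which yield $|A(b)-A(a)|+|B(b)-B(a)|\le 2\bigl(|Q(a)|+|Q(b)|\bigr)$ and hence
$$
\left|\sum_{j=a}^b Q(j)f(n+j)\right|\le\tau\,|Q(b)|+2\tau\bigl(|Q(a)|+|Q(b)|\bigr)\le 4\tau\bigl(|Q(a)|+|Q(b)|\bigr),
$$
as claimed. The only genuinely delicate step is the monotonicity argument: everything hinges on reading the geometric condition that $[a,b]$ avoids the critical set $C(Q)$ as the assertion that the real and imaginary parts of $Q$ vary monotonically along the interval, which is exactly what collapses a sum of absolute differences into a single endpoint difference. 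The Abel summation and the concluding estimates are routine.
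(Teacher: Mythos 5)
Your proof is correct and follows essentially the same route as the paper's: Abel summation against the partial sums $S_r$, followed by collapsing the discrete total variation of $Q$ via the monotonicity of its real and imaginary parts $A$, $B$ on $[a,b]$ (which is exactly what $[a,b]\cap C(Q)=\emptyset$ guarantees), and finishing with $|A(x)|,|B(x)|\le |Q(x)|$ at the endpoints. Your bookkeeping even yields the slightly sharper intermediate bound $\tau\bigl(3|Q(b)|+2|Q(a)|\bigr)$ before relaxing to the stated $4\tau\bigl(|Q(a)|+|Q(b)|\bigr)$.
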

\begin{proof} Write $Q(X)=A(X)+iB(X)$ with $A$ and $B$ real polynomials.
Since $C\cap [a,b]=\emptyset$, we have that $A(X)$ and $B(X)$ are monotonic on the interval $[a,b]$. Then using summation by parts we see that 
\begin{align*} 
& \left| \sum_{j=a}^b Q(i) f(n+j)\right|\\
 & \qquad \qquad =\left| Q(b) \sum_{j=a}^b f(n+j) + \sum_{r=a}^{b-1} \(Q(r)-Q(r+1)\) \sum_{j=a}^r f(n+j)\right| \\
 & \qquad \qquad  \le \tau  \left(|Q(b)| + \sum_{r=a}^{b-1} |Q(r)-Q(r+1)|\right).
 \end{align*}
Thus to complete the proof, it is enough to show that 
$$|Q(b)| + \sum_{r=a}^{b-1} |Q(r)-Q(r+1)|\le 4|Q(b)|+4|Q(a)|.$$
Notice that
\begin{align*} 
  |Q(b)| & + \sum_{r=a}^{b-1} |Q(r)-Q(r+1)|\\
& \le    |A(b)|+|B(b)|\\
& \qquad +\sum_{r=a}^{b-1} \(|A(r)-A(r+1)|+|B(r)-B(r+1)|\).
\end{align*}
Since $C(Q)\cap [a,b] =\emptyset$, we see that $A$ and $B$ are monotonic on $[a,b]$ and thus 
$$
\sum_{r=a}^{b-1} (|A(r)-A(r+1)| +|B(r)-B(r+1)|) = |A(b)-A(a)| + |B(b)-B(a)|.$$
In turn this implies that 
\begin{align*}
|Q(b)| + \sum_{r=a}^{b-1}  |Q&(r)-Q(r+1)|\\ & \le   2|A(b)|+2|B(b)| + |A(a)|+|B(a)|\\
& \le   4|Q(b)|+2|Q(a)|  \le   4|Q(b)|+4|Q(a)|,\end{align*}
 as required.
\end{proof}

\begin{lem}
\label{lem: 3}
Let $m$ and $D$ be positive integers and let $\kappa$ and $\tau$ be positive real numbers, let $Q_1(X),\ldots ,Q_m(X)$ be complex polynomials of degree at most $D$, and let $f:\Z\to \C$.  Suppose that $a$ and $b$ and $L$ are positive integers that have the following properties:
\begin{enumerate} 
\item[(i)] $b-a>4\tau(m-1)e^2/\kappa$;
\item[(ii)] $|j-\lambda|\ge a/LD$ for $j\in \{a,a+1,\ldots ,b\}$ and for all roots $\lambda$ of $Q$;
\item[(iii)] $(b-a)/a\le 1/D^2L$;
\item[(iv)] $(\cup_{i<j} C(Q_i Q_j))\cap [a,b]=\emptyset$. 
\end{enumerate}
Suppose in addition that $f(n)$ satisfies:
\begin{enumerate}
\item[(I)] $|f(a+j)|^2+\cdots + |f(b+j)|^2 \ge \kappa(b-a)$ for $j=1,\ldots , m$;
\item[(II)] $\left|\sum_{n=a}^r f(n+j)f(n+k)\right| \le \tau$ for all integers $n$ and all $r\in \{a,\ldots ,b\}$.
\end{enumerate}
Then for $$
\Delta(n) = \sum_{j=1}^m Q_j(n)f(n+j)$$ 
we have $\Delta(n) \ne 0$ for some natural number  $n\in \{a,a+1,\ldots ,b\}$. 
\end{lem}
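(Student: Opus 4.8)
The plan is to argue by contradiction: I would assume that $\Delta(n)=0$ for every $n\in\{a,a+1,\ldots,b\}$ and show that this is incompatible with the quantitative hypotheses. The engine is a ``diagonal dominates off-diagonal'' estimate. First I would single out an index $k\in\{1,\ldots,m\}$ maximising $M_k:=\max_{a\le n\le b}|Q_k(n)|$ (which is positive, since the $Q_j$ are not all zero), and then test the identity $\Delta\equiv 0$ against the conjugate of its own $k$-th term, that is, I would consider
$$
S=\sum_{n=a}^{b}\overline{Q_k(n)}\,\overline{f(n+k)}\,\Delta(n),
$$
which vanishes by assumption. Expanding $\Delta$ gives $S=\sum_{j=1}^{m}\sum_{n=a}^{b}\overline{Q_k(n)}Q_j(n)\,\overline{f(n+k)}f(n+j)$, and the whole point is to show that the diagonal contribution $j=k$ is strictly larger in modulus than the sum of the off-diagonal contributions, contradicting $S=0$.

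For the diagonal term $j=k$ I would apply Lemma~\ref{lem: 1} to the polynomial $Q_k$ with the shifted function $n\mapsto f(n+k)$: hypotheses (ii) and (iii) of the present lemma are precisely the root-separation and interval-length hypotheses of Lemma~\ref{lem: 1}, while hypothesis (I) supplies its mass condition. This yields $\sum_{n=a}^b|Q_k(n)|^2|f(n+k)|^2\ge e^{-2}(b-a)\kappa\,M_k^2$. For each off-diagonal term $j\ne k$ I would set $P_{kj}(X)=\overline{Q_k}(X)Q_j(X)$, a genuine polynomial of degree at most $2D$ when evaluated at integer arguments, together with $g_{kj}(n)=\overline{f(n+k)}f(n+j)$, and then invoke Lemma~\ref{lem: 2}. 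Hypothesis (iv) is tailored so that $[a,b]$ avoids the critical sets of the relevant product polynomials, whence the real and imaginary parts of $P_{kj}$ are monotone on $[a,b]$ and the summation-by-parts estimate of Lemma~\ref{lem: 2} is available; hypothesis (II) furnishes the required partial-sum bound. This gives $\bigl|\sum_n P_{kj}(n)g_{kj}(n)\bigr|\le 4\tau\bigl(|P_{kj}(a)|+|P_{kj}(b)|\bigr)$, which I would then control using $|Q_k|,|Q_j|\le M_k$ on $[a,b]$.

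Summing the off-diagonal estimates over the $m-1$ indices $j\ne k$ bounds the total off-diagonal contribution by a quantity of size $\tau(m-1)M_k^2$, while the diagonal term is bounded below by $e^{-2}(b-a)\kappa\,M_k^2$. Hypothesis (i), namely $b-a>4\tau(m-1)e^2/\kappa$, is exactly the threshold on $b-a$ that, after a sufficiently careful accounting of the endpoint values, makes the diagonal lower bound exceed the off-diagonal total. Once it does, $S$ cannot vanish, contradicting $S=0$, and therefore $\Delta$ is not identically zero on $\{a,\ldots,b\}$.

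I expect the main obstacle to be the off-diagonal estimate rather than the diagonal one. The diagonal bound is a direct citation of Lemma~\ref{lem: 1}, but the off-diagonal terms carry the product polynomials $\overline{Q_k}Q_j$, and to apply Lemma~\ref{lem: 2} one must know that the real and imaginary parts of these products have no critical points inside $[a,b]$, which is the purpose of hypothesis (iv); keeping the bookkeeping of constants tight enough that the threshold in (i), rather than a larger multiple of it, suffices is the delicate part. A secondary technical point is the complex conjugation: Lemma~\ref{lem: 2} and hypothesis (II) must be matched so that the bilinear sums $\sum\overline{f(n+k)}f(n+j)$ that actually arise are the ones being controlled. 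For the intended application to the real-valued Legendre symbol this distinction disappears.
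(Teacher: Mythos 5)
Your proposal is essentially the paper's own proof: the paper likewise reduces the claim to a diagonal contribution bounded below via Lemma~\ref{lem: 1} (using (ii), (iii), (I)) and off-diagonal bilinear sums bounded via Lemma~\ref{lem: 2} (using (iv), (II)), the only structural difference being that the paper works with the positivity of $\sum_{n=a}^b\left|\Delta(n)\right|^2$, keeping all $m$ diagonal terms, rather than testing $\Delta\equiv 0$ against its single maximal term as you do. The constant slack you flag as the delicate point (your accounting requires $b-a>8\tau(m-1)e^2/\kappa$ instead of the hypothesized $b-a>4\tau(m-1)e^2/\kappa$) occurs in the paper as well, where it is hidden by a dropped factor of $2$ on the cross terms $\sum_{j<k}$ when the square is expanded (they come in conjugate pairs), so on this point your argument is no weaker than the published one.
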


\begin{proof}
It suffices to show that
$$\sum_{n=a}^b \left|\sum_{j=1}^m Q_j(n)f(n+j)\right|^2 > 0. $$  
Expanding, we see this is greater than or equal to 
\begin{align*}
\sum_{n=a}^b & \left|\sum_{j=1}^m Q_j(n)f(n+j)\right|^2 \\ 
&\qquad = 
\sum_{n=a}^b \sum_{j=1}^m |Q_j(n)|^2 |f(n+j)|^2 \\
& \qquad \qquad  -\sum_{1\le j<k\le m}\left| \sum_{n=a}^b  Q_j(n)\overline{Q_k(n)} f(n+j)\overline{f(n+k)}\right|.
\end{align*}
Since 
$$
\left|\sum_{n=a}^r f(n+j)\overline{f(n+k)}\right| \le \tau,
$$ 
we then see by Lemma~\ref{lem: 2} that  to
\begin{equation}
\begin{split}
\label{eq: bound1}\sum_{n=a}^b & \left|\sum_{j=1}^m Q_j(n)f(n+j)\right|^2 \\ 
&\qquad \ge \sum_{j=1}^m \sum_{n=a}^b |Q_j(n)|^2 |f(n+j)|^2 \\
& \qquad\qquad\qquad -4\tau \sum_{1\le j<k\le m} (|Q_j(a)Q_k(a)|+ |Q_j(b)Q_k(b)|).
\end{split}
\end{equation}
Let $M_j= \max\left\{|Q_j(a)|^2,|Q_j(b)|^2\right\}$, $j = 1, \ldots, m$. 
By Lemma~\ref{lem: 1}, we see that
$$
\sum_{j=1}^m \sum_{n=a}^b |Q_j(n)|^2 |f(n+j)|^2 \ge \frac{(b-a)\kappa}{e^2} \sum_{j=1}^m M_j,
$$ while we have
\begin{align*} |Q_j(a)Q_k(a)| &+ |Q_j(b)Q_k(b)|\\
& \le  
\frac{1}{2}\left(|Q_j(a)|^2 +|Q_k(a)|^2 + |Q_j(b)|^2+|Q_k(b)|^2\right) \\
&\le M_j+M_k.
\end{align*}
Then we see that the quantity on the right-hand side of~\eqref{eq: bound1} is greater than or equal to 
\begin{align*} 
\sum_{j=1}^m \frac{(b-a)\kappa}{e^2} M_j &- \sum_{1\le j<k\le m} 4\tau(M_j+M_k) \\
&=  \sum_{j-1}^m \left(\frac{(b-a)\kappa}{e^2} -\tau(m-1)\right)M_j>0.
\end{align*}
Since $\kappa(b-a)>4\tau(m-1)e^2$ and  by~(ii), we have $M_i>0$, $j = 1, \ldots, m$, we obtain the desired result.
\end{proof}

\begin{lem}
Let $Q_1(X),\ldots ,Q_m(X)$ be polynomials of degree at most $D$.  Given $A\ge 1$ there exist
positive integers $a$ and $b$ and $L$ with 
$$ 
b<20 A e^3 D^2 m^3 (A+Dm) \mand L= 9 m^2 
$$
satisfying:
\begin{enumerate} 
\item[(i)] $b-a>A(m-1)e^2$;
\item[(ii)] $|j-\lambda|\ge a/LD$ for $j\in \{a,a+1,\ldots ,b\}$ and for all roots $\lambda$ of each $Q_i$ that is nonzero;
\item[(iii)] $(b-a)/a\le 1/D^2L$;
\item[(iv)] $(\cup_{i<j} C(Q_i Q_j) \cap [a,b]=\emptyset$.
\end{enumerate}
\label{lem: 4}
\end{lem}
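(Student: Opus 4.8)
The plan is to produce the triple $(a,b,L)$ by a pigeonhole argument: fix $L=9m^2$ (as the statement demands) and a suitable interval length $\ell=b-a$, then search for an admissible left endpoint $a$ inside a dyadic window $[a_0,2a_0]$, discarding the comparatively few positions excluded by (ii) and (iv). First I would take $\ell$ to be the least integer strictly greater than $A(m-1)e^2$, which forces (i), and set $a_0=\lceil 37D^2m^2\ell\rceil$, restricting attention to $a\in[a_0,2a_0-\ell]$. With this choice (iii) holds for free, since $a\ge a_0\ge 9D^2m^2\ell$ gives $(b-a)/a=\ell/a\le 1/(9D^2m^2)=1/(D^2L)$.

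Next I would reduce (ii) to a statement on the real line. For real $j$ and any complex root $\lambda$ one has $|j-\lambda|\ge|j-\mathrm{Re}\,\lambda|$, and for $a\le 2a_0$ one has $a/(LD)\le\rho:=2a_0/(9Dm^2)$; hence it suffices to keep the whole interval $[a,a+\ell]$ at distance at least $\rho$ from $\mathrm{Re}\,\lambda$ for every root $\lambda$ of every nonzero $Q_i$. There are at most $mD$ such roots, and for each the set of forbidden left endpoints $a$ is an interval of length $2\rho+\ell$. For (iv) only the real points of $\bigcup_{i<j}C(Q_iQ_j)$ matter; since $\deg(Q_iQ_j)\le 2D$ each set $C(Q_iQ_j)$ has at most $4D-2$ points, and with at most $m(m-1)/2$ pairs this yields fewer than $2Dm^2$ points, each forbidding an $a$-interval of length $\ell$.

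Then I would run the counting. The roots contribute forbidden measure at most $mD(2\rho+\ell)=4a_0/(9m)+mD\ell$ and the critical points at most $2Dm^2\ell$; using $\ell\le a_0/(37D^2m^2)$ the two $\ell$-terms stay below $3a_0/37$, so the total forbidden measure is below $4a_0/9+3a_0/37<0.53\,a_0$, whereas the admissible window $[a_0,2a_0-\ell]$ has measure at least $36a_0/37>0.97\,a_0$. Hence the good set has measure exceeding $a_0/3$, and since it is a union of at most $3Dm^2+1\le 4Dm^2$ intervals its longest component has length at least $a_0/(12Dm^2)\ge 37D/12>1$ (because $a_0\ge 37D^2m^2$); an interval of length greater than $1$ contains an integer, which I take to be $a$, and I set $b=a+\ell$. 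By construction all four conditions hold, and $b\le 2a_0\le 74D^2m^2\ell+2$; inserting $\ell\le A(m-1)e^2+1$ and treating the cases $m=1$ and $m\ge 2$ separately (using $A+Dm\ge 2$) shows $b$ stays comfortably below the target $20Ae^3D^2m^3(A+Dm)$.

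The main obstacle is that the separation demanded in (ii) scales with $a$ itself, so the forbidden neighbourhoods grow as the search window moves; I handle this self-reference by the crude but sufficient dyadic bound $a\le 2a_0$, which decouples the width $\rho$ from the individual value of $a$. The second delicate point is purely one of integrality: a good set of positive measure need not meet $\Z$, so I must control the number of excluded intervals (here $O(Dm^2)$) to guarantee that the longest surviving gap exceeds length $1$. This is exactly why $a_0$ is taken to be a fixed constant multiple larger than the minimum forced by (iii), which in turn is what leaves the slack needed for the final size comparison.
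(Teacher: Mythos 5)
Your proposal is correct, and it takes a genuinely different route from the paper. The paper fixes $L=9m^2$ and then builds an explicit, geometrically growing family of pairwise disjoint candidate windows $[D^2LR_i,\,D^2LR_i+R_i]$, with $R_i$ defined recursively so that consecutive windows cannot both be spoiled by the same root or critical point; since there are at most $Dm$ roots and at most $2Dm^2$ points in $\bigcup_{i<j}C(Q_iQ_j)$, taking $t=Dm+2Dm^2$ windows leaves a good one by pigeonhole, and the final size bound comes from the somewhat delicate estimate that the product of the growth ratios $\bigl(1+(2D+1)/(D^2L-D)\bigr)^{Dm+2Dm^2}$ stays below $e$. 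You instead work inside a single dyadic window $[a_0,2a_0]$ with a fixed interval length $\ell$, and run a continuous measure-counting argument: each root forbids an $a$-interval of length $2\rho+\ell$ and each critical point one of length $\ell$, the total forbidden measure is below $0.53\,a_0$, and the surviving set, being a union of at most $4Dm^2$ intervals of total measure exceeding $a_0/3$, has a component of length greater than $1$ and hence contains an integer. Your decoupling of the self-referential condition (ii) via $a/(LD)\le 2a_0/(9Dm^2)$ plays the role that the disjointness of the recursively defined windows plays in the paper. What each approach buys: yours avoids the paper's geometric-growth inequality entirely (the trickiest computation in the paper's proof, including its separate $D=1$ case) and yields a numerically slightly better constant; the paper's construction gets integrality of $a$ and $b$ for free (they are integers by construction), whereas you must add the component-counting step to ensure the good set meets $\Z$ — a point you correctly identified and handled.
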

\begin{proof}
The inequalities $b-a>A(m-1)e^2$ and $b-a\le a/D^2L$ are satisfied whenever $a\ge AD^2Lm^2(m-1)e^2$ and $b\le a(1+1/D^2L)$.  
For an integer $R>A(m-1)e^2$, consider the interval $[D^2LR,D^2LR+R]$. We show that we can find some $R$ such that we can take $a=D^2LR$ and $b=2D^2LR+R$ and~(i)--(iv) hold.  

Notice that if there is some $\lambda$ that is a root of a nonzero $Q_i$ such that
$|j-\lambda|<DR$ for some $j\in [D^2LR,D^2LR+R]$, then for the real part of $\lambda$ we have:
$$
\Re \lambda \in [D^2LR-DR,D^2LR+DR+R].
$$ 
 In particular, since there are at most $Dm$ values of $\lambda$ we see that if $t>Dm$ and $R_1,\ldots ,R_t$ are integers with each $R_i>A(m-1)e^2$ such that the intervals $[D^2L R_i-DR_i,D^2LR_i+DR_i+R_i]$ are pairwise disjoint then there are at least $t-Dm$ values of $i$ in $\{1,\ldots ,t\}$ such that $|j-\lambda|<DR_i$ for $j\in [D^2LR_i,D^2LR_i+R_i]$ and all roots of nonzero $Q_i$.  Now we define
$R_1=\lceil A(m-1)e^2+1 \rceil$, and then for $i>1$ we define $R_i$ recursively as 
$$
R_i = \lceil R_{i-1} (D^2L+D+1)/(D^2L-D) \rceil.
$$ 
Then we see that $D^2 L R_{i+1}-DR_{i+1} > D^2L  R_i +DR_i+R_i$ and so the intervals  
$[D^2L R_i-DR_i,D^2LR_i+DR_i+R_i]$ are pairwise disjoint.  By induction, we have
$$
R_i < \left( \frac{D^2L+D+1}{D^2L-D}\right)^{i-1}(A(m-1)e^2 +i),
$$ and so if we take 
$t=Dm+2Dm^2$ then there are at least $2Dm^2$ values of $R_i$ with $i\le t$ such that 
 $|j-\lambda|<DR_i$ for $j\in [D^2LR_i,D^2LR_i+R_i]$ and all roots $\lambda$ of nonzero $Q_i$.  Now since $$\bigcup_{i<j} C(Q_i Q_j)$$ has size at most $2Dm^2$, we then see that there is some $i$ with $i\le t$ such that
$a=2D^2mR_i$ and $b=2D^2mR_i+R_i$ satisfy conditions (i)--(iv). 

Recall that   $L=9m^2$. Then for $D \ge 2$ and any $m\ge 1$ we have the following 
obvious inequalities
\begin{equation}
\begin{split}
\label{eq: Elem ineq}
\(1+\frac{2D+1}{D^2L-D}\)^{Dm+2Dm^2}&<\(1+\frac{3D}{9D^2m^2}\)^{Dm+2Dm^2}\\
& < \(1+\frac{1}{3Dm^2}\)^{3Dm^2} <e.
\end{split}
\end{equation}
For $D=1$, using elementary calculus one also derives that 
$$
\(1+\frac{3}{9m^2-1}\)^{m+2m^2}  <e, 
$$
for any $m \ge 1$ thus~\eqref{eq: Elem ineq} also holds in this case.

Therefore, using~\eqref{eq: Elem ineq}, we derive
\begin{align*}
b& \le   (2D^2 L+1)R_t \\
& \le   (2D^2L+1) \left( \frac{D^2L+D+1}{2D^2L-D}\right)^{Dm+2Dm^2}\\
& \qquad \qquad \qquad  \qquad \cdot (A(m-1)e^2 +Dm+2Dm^2)\\
& \le  (2D^2L+1) (1+(2D+1)/(D^2L-D))^{Dm+2Dm^2}\\
& \qquad  \qquad \qquad  \qquad \cdot(A(m-1)e^2 +Dm+2Dm^2)\\
&\le   20 eD^2 m^2  \cdot (Ame^2+3Dm^2) \le 20 e^3 D^2 m^3 (A+Dm),
\end{align*}
which concludes the proof.
\end{proof}

\subsection{Concluding the proof of Proposition~\ref{prop:HAT}}
The result follows immediately from Lemmas~\ref{lem: 3} and~\ref{lem: 4} 
taking $A=4\tau/\kappa$.

\section{Proof of Theorem~\ref{thm:Bound}}

%
%
%
%
%
%
%
%
%

\subsection{Incomplete character sums}

We recall the following well-known bound on incomplete character sums which
 follows from the Weil bound on complete mixed sums of multiplicative and 
additive characters and the standard reduction between complete  and  incomplete sums,
see~\cite[Section~12.2]{IwKow}.

  \begin{lem}
\label{lem:IncompSum}  
Let $p$ be an odd prime, let $f(n)=\leg{n}$, 
and let $g(n)=f(n+j) f(n+h)$ for some natural numbers   $j<h<p$.  
Then for any natural number $K< p$ we have 
$$
 \left| \sum_{k=1}^K g(n+k)\right|\le B  p^{1/2} \log p
$$
for some absolute constant $B$. 
\end{lem}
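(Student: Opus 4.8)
The plan is to recognize the summand as a quadratic character twist and then apply the classical completion technique together with the Weil bound for mixed character sums. Fix the parameter $n$. First I would observe that
$$g(n+k) = \leg{n+k+j}\leg{n+k+h} = \leg{Q(k)}, \qquad Q(k) = (k+n+j)(k+n+h),$$
so that $\sum_{k=1}^K g(n+k)$ is an incomplete sum of the quadratic character evaluated at the quadratic $Q$. Since $0\le j<h<p$ forces $n+j\not\equiv n+h \pmod p$, the polynomial $Q$ has two distinct roots modulo $p$ and is therefore squarefree; in particular $\leg{Q(\cdot)}$ is a nonprincipal multiplicative character composed with a squarefree polynomial, which is exactly the nondegeneracy hypothesis needed for the Weil bound. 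The finitely many $k$ with $Q(k)\equiv 0$ contribute zero and may be discarded.

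Next I would complete the sum using additive characters. For $m\in\{0,1,\ldots,p-1\}$ and $K<p$ one has the indicator identity
$$\mathbf{1}_{[1,K]}(m) = \frac{1}{p}\sum_{a=0}^{p-1}\sum_{k=1}^{K}\ep(a(m-k)),$$
which, after interchanging the order of summation, yields
$$\sum_{k=1}^{K} g(n+k) = \frac{1}{p}\sum_{a=0}^{p-1} W(a)\, E(a), \qquad W(a) = \sum_{m=0}^{p-1}\leg{Q(m)}\ep(am), \quad E(a) = \sum_{k=1}^{K}\ep(-ak).$$
Here each $W(a)$ is a complete mixed multiplicative–additive character sum and each $E(a)$ is a geometric sum.

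Then I would invoke the two standard estimates. Because $Q$ is squarefree of degree $2$ and the additive argument $am$ is linear, the Weil bound gives $|W(a)|\le 2p^{1/2}$ uniformly in $a$ (for $a=0$ one even has the sharper value, since $\sum_m\leg{(m-\alpha)(m-\beta)}=-1$ for $\alpha\ne\beta$). For the geometric sums the elementary estimate $|E(a)|\le \min\{K,\tfrac12\|a/p\|^{-1}\}$ holds, where $\|\cdot\|$ denotes distance to the nearest integer. Substituting and using the classical bound $\sum_{1\le a\le p-1}\|a/p\|^{-1}\ll p\log p$, I obtain
$$\left|\sum_{k=1}^{K} g(n+k)\right| \le \frac{2p^{1/2}}{p}\left(K + \sum_{a=1}^{p-1}\frac{1}{2\|a/p\|}\right) \ll p^{-1/2}\left(p + p\log p\right) \ll p^{1/2}\log p,$$
which is the claimed bound with an absolute constant $B$.

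The only substantive input is the Weil bound for $W(a)$; everything else is the routine complete-to-incomplete reduction recorded in~\cite[Section~12.2]{IwKow}, with the logarithmic factor produced entirely by the harmonic sum $\sum_a\|a/p\|^{-1}$. The single point that genuinely requires care is the nondegeneracy of $Q$: one must confirm that $Q$ is not a perfect square so that $\leg{Q(\cdot)}$ is nonprincipal on the generic fibre and Weil's estimate applies with square-root savings. This is precisely where the hypothesis $j<h$ enters, and I expect no further obstacle.
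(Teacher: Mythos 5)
Your proof is correct and is exactly the argument the paper has in mind: the paper does not write out a proof but simply cites the Weil bound for complete mixed multiplicative--additive character sums together with the standard completion technique of~\cite[Section~12.2]{IwKow}, which is precisely what you execute, including the key nondegeneracy observation that $j<h<p$ makes $Q$ squarefree modulo $p$. No gaps; your constant-tracking ($|W(a)|\le 2p^{1/2}$, the harmonic sum $\ll p\log p$) is sound.
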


\subsection{Concluding the proof}
We note that 
$$
\sum_{n=a}^b |f(n)|^2 \ge (b-a)
$$ for $a<b$ and by 
Lemma~\ref{lem:IncompSum} 
we   can apply Proposition~\ref{prop:HAT} to $f(n)$ with 
$$\kappa=1 \mand \tau \ll p^{1/2}\log\, p
$$ 
and we see that if $G(X)$ is a holonomic power series whose sequence of coefficients satisfy a polynomial recurrence of length at most $m$ and of degree at most $D$ then $G(x)$ can agree with the Fekete polynomial $F_p(X)$  
up to degree $N$ at most 
$$
N \ll D^2 m^3 (\tau + Dm) \ll D^2 m^3 \(p^{1/2}\log\, p + Dm\) .
$$
In particular, if $G(X)$ is an algebraic power series satisfying a polynomial equation $h(X,G(X))=0$, where $h(X,Y)$ has total degree at most $d_p(N)$, then by Proposition~\ref{prop:PolyRec} 
this gives that we can take 
$m\le 4d_p(N)^2$ and $D\le 3(d_p(N)+1)^2$ and so
 \begin{equation}
\label{eq:Bound}
N\ll  d_p(N)^{10}  \(p^{1/2}\log\, p + d_p(N)^{4}\). 
\end{equation} 
Clearly, if $d_p(N)^{4} \ge p^{1/2}\log\, p$ then 
$d_p(N) \ge p^{1/8} \ge N^{1/8}$ and thus there is nothing to prove.
Otherwise we  derive from~\eqref{eq:Bound} that 
$N\ll  d_p(N)^{10}  p^{1/2}\log\, p$ 
and the result follows.


\section{Comments}

We note that it is not difficult (but somewhat tedious) to get an explicit version 
 of Theorem~\ref{thm:Bound}. Furthermore,  a slight modification of the proof of Theorem~\ref{thm:Bound}
allows one to remove the factor $\log\, p$ in the denominator of its lower bound.
However, a much more challenging question is to obtain a nontrivial lower 
bound on $d_p(N)$  for $N$ below the $p^{1/2}$-threshold. 
Within our approach, 
this rests on the existence of nontrivial bounds on short character  sums with linear and quadratic 
polynomials. For linear polynomials the Burgess bound (see~\cite[Theorem~12.6]{IwKow})
provides such a necessary tool. However, for quadratic polynomials the problem  obtaining 
nontrivial bounds for sums of length below $p^{1/2}$ is still widely open, 
see the survey~\cite{Chang}.

\section*{Acknowledgement}

The authors are very grateful to Alin Bostan and Arne Winterhof for their interest and 
many useful suggestions. In particular, Alin Bostan h  informed us 
about~\cite[Theorem~2]{BCLSS}, which allowed us to improve our initial 
results. 

The second author gratefully acknowledges the support, the hospitality
and the excellent conditions at RICAM, Austrian Academy of Science, Linz, 
during his visit.

The first author was supported by NSERC grant RGPIN-2016-03632; 
the second author was  supported   by ARC Grant~DP140100118.

\end{document}